\newtheorem{theorem}{Theorem}[section]
\newtheorem{definition}{Definition}[section]
\newtheorem{proposition}[theorem]{Proposition}
\newtheorem{corollary}[theorem]{Corollary}
\newtheorem{remark}{Remark}[section]
\begin{document}

\title{Generalized solutions of the multidimensional stochastic Burgers equation.}

\author{  P. Catuogno\footnote{Departamento de Matem\'{a}tica, Universidade Estadual de Campinas, Brazil. 
E-mail:  {\sl  pedrojc@ime.unicamp.br}.
},
J.F. Colombeau \footnote{ Institut Fourier, Universit\'e de Grenoble., France. 
E-mail: {jf.colombeau@wanadoo.fr}.
}, 
C. Olivera\footnote{Departamento de Matem\'{a}tica, Universidade Estadual de Campinas, Brazil. 
E-mail:  {\sl  colivera@ime.unicamp.br}.
}}

\date{}

\maketitle

\noindent {\bf Key words:} Stochastic Burgers equation, Generalized functions, Multiplication of distributions,  Generalized solutions.

\vspace{0.3cm} \noindent {\bf MSC2000 subject classification:} 60H15, 35R60, 46F99.

\vspace{0.3cm}

\begin{center}
\begin{abstract}

\noindent   We introduce a new concepts of weak solution for the 
conservative  stochastic Burgers equation in any dimension. The definition is based on  weak solution concepts   introduced by various authors in order to make sense of  equations which  do not  solutions in the sense of distributions. In one dimension the solution reduces to the classical distributional solution of the 1--D stochastic Burgers equation.
\end{abstract}
\end{center}

\maketitle

\section {Introduction}

\noindent

The aim of this paper is to study the existence of solution to the multidimensional
stochastic Burgers equation in $\mathbb{R}^d$ of the following form:

\begin{equation}\label{burg}
 \left \{
\begin{array}{lll}
    \partial_t U(t, x) =  \ \Delta_x U(t, x) +    \ \nabla_x   \|U(t,x)\|^{2} +   \nabla_x  \dot{W}(t,x),\\
 U(0, x) = \nabla_x  f(x).
\end{array}
\right .
\end{equation}

\noindent where $\dot{W}(t,x)$ is a space--time white noise.\\

\noindent During the past few decades, the stochastic Burgers equation has found applications in
diverse fields ranging from statistical physics, cosmology to fluid dynamics. The problem of 
Burgers turbulence, that is the study of the solutions of the Burgers equation with random
initial conditions or random forcing is a central issue in the study of nonlinear systems out of equilibrium. See J. Bec, K. Khanin \cite{Beck} and E. Weinan \cite{Wie}.\\

\noindent The multidimensional conservative stochastic Burgers equation has been studied by several authors in the case  
$\dot{W}(t,x)$ is a noise white in time 
and correlated in space. See A. Dermoune \cite{Dermoune}, 
Z. Brezniak, B. Goldys, M. Neklyudov \cite{BGN} and R. Iturriaga,  K. Khanin \cite{IK}. In the case that the 
nonlinear term is interpreted in the sense of Wick calculus the stochastic Burgers equation has been well studied, see S. Assing \cite{AS} and also H. Holden, T. Lindstr\o m, B. \O ksendal, J. Ub\o e, T. Zhang \cite{HLOUZ1} and \cite{HLOUZ2}. They showed existence and uniqueness results for the solution regarded as a stochastic process with values in a Kondratiev space of stochastic distributions. \\

\noindent Up to our knowledge there is no proof of existence of solution in any sense    
for equation (\ref{burg}) when $d \geq 2$. In the case that $d=1$, the problem of existence of solutions for stochastic Burgers equation is well understood, see L. Bertini, N. Cancrini, G. Jona--Lasinio \cite{BCJL} , G. Da Prato, A. Debussche, R. Teman \cite{dPDT},   P. Catuogno, 
C. Olivera \cite{CO}  and     M. Gubinelli, N.Perkowski  \cite{Gubi}.

\noindent A main difficulty with the multidimensional stochastic Burgers equation is that the solutions 
take values in a distributional space. Therefore, it is necessary to give meaning to the 
non--linear term $\nabla_x   \|U(t,x)\|^{2}$. In the one--dimensional case the Cole--Hopf 
transformation is used to sort  out this problem, but this  does not make sense 
in the  multidimensional case  since the solution of the  stochastic heat equation
is not a standard stochastic process \cite{Walsh}. \\

\noindent We remark that it is believed that the Cole--Hopf solution  $U(t, x):=\nabla_x \log Z(t,x)$, where $Z(t,x)$ denotes the solution of the stochastic  heat equation with multiplicative noise, is the correct physical solution of (\ref{burg}), see  \cite{BCJL} and \cite{BG}.
 We mention also that  the Cole--Hopf solution is related to the KPZ equation, see \cite{Hairer}.  
 
\noindent  We introduce a new concept of solution in  order to give sense to this expected  solution of the multidimensional stochastic Burgers equation. This new concept of solution is strongly inspired by previous 
works of many  authors using generalized solutions for equations of mathematical physics in cases in that the expected or known (for instance from numerical investigations) solutions do not make sense within distribution theory: \cite{  colb, colb2,  Grosser, ober2, Pa}. In particular such generalized solutions have been considered for stochastic equations in \cite{albe},  \cite{Russo}, \cite{ober4}. \\

\noindent We denote by $\mathcal{D}((0,T) \times \mathbb{R}^d ; \mathbb{R}^d)$ the space of
the infinitely differentiable functions with
compact support in   $(0,T) \times \mathbb{R}^d$ and values in $\mathbb{R}^d$. 

\noindent Let us denote by $\mathcal{L}_B$ the Burgers operator, 
\[
\mathcal{L}_B\varphi = \partial_t \varphi -  \ \Delta_x \varphi -    \ \nabla_x   \|\varphi\|^{2} 
\]
for $\varphi \in \mathcal{D}((0,T) \times \mathbb{R}^d ; \mathbb{R}^d)$.

\noindent Now, we introduce the concept of weak solution for the stochastic Burgers equation. 

\begin{definition}\label{solu3}
We say that a sequence  $(U_n)$ of smooth semimartingales 
is an $\mathbb{R}^d$--{\it weak generalized solution} of the equation  (\ref{burg})
if
\begin{enumerate}
\item For each $\varphi \in \mathcal{D}((0,T) \times  \mathbb{R}^d ;  \mathbb{R}^d)$,
\[
\lim_{n \rightarrow \infty}( \mathcal{L}_BU_n, \varphi ) =  ( \nabla \dot{W}, \varphi)_{\mathcal{D^{\prime}}}, 
\]

where the convergence is in $L^{2}(\Omega)$.

\item $U_n (0,x)=\nabla f(x)$.
\end{enumerate}

\end{definition}

\noindent  The above means that, after averaging on any smooth  test function, the sequence $(U_n)$  tends to satisfy the equation when $n\rightarrow \infty$. In general we ignore the nature of the possible limit of the sequence $(U_n)$: this limit can be a distribution which is not necessarily a solution of the equation in the sense of  distribution theory, such as in the case of shock waves for nonconservative systems \cite{Ara,colb2}, in the case of  delta waves in general and a fortiori delta--prime wave \cite{Pa}, or it can be an object which is not a distribution, such as  objects put in evidence in general relativity in \cite{Stein}. In one  dimension the limit of the sequence $(U_n)$ that we construct  exists as a distribution which  is a  solution in the sense of distributions. \\

\noindent  We  shall show  that there exists a weak generalized solution for the multi--dimensional stochastic Burgers equation, which is  a  type of Cole--Hopf  solution.

This paper is organized as follows. In the next section we review some facts on space--time white noise. In section 3 we show the 
existence of such a weak solution for the stochastic Burgers equation (\ref{burg}). Then we  recover the stronger result
of existence of a distributional solution for the stochastic Burgers equation in dimension $1$.

\section{Preliminaries}

\noindent We say that a random field $\{ S(t,x) : t \in [0,T],~x \in \mathbb{R}^d \}$ is a spatially dependent semimartingale if for
each $x\in \mathbb{R}^d$,
$\{S(t,x): t \in [0,T]\}$ is a $\mathbb{R}^d$--valued semimartingale in relation to the same filtration $\{ \mathcal{F}_t : t \in [0,T] \}$. If
$S(t,x)$ is a $C^{\infty}$--function of $x$ and continuous in $t$  almost surely (i.e., for almost all $\omega$),  it is called  a smooth semimartingale.  See \cite{Ku} for a rigorous study of spatially depend semimartingales
and applications to  stochastic differential equations.

\noindent A distribution valued Gaussian process with mean zero $\{\dot{W}(t,x): t \in [0,T] ~, ~x \in \mathbb{R}^d\}$ is a
space--time white noise if
\[
 \mathbb{E}( \dot{W}(t,x) \dot{W}(s,y) ) = \delta(t-s) \delta(x-y).
\]
More precisely :

\begin{itemize}

 \item For any $\xi \in \mathcal{D}((0,T) \times \mathbb{R}^d)$ the random variable $\dot{W}(\xi)$
 is  Gaussian variable  with mean zero. 

\item For any $\xi, \rho \in  \mathcal{D}((0,T) \times \mathbb{R}^d)$ the random variables
$\dot{W}(\xi), \dot{W}(\rho)$  have  covariance 
\[
 \mathbb{E}(\dot{W}(\xi) \dot{W}(\rho))=
\int_{[0,T]\times \mathbb{R}^d} \xi(t,x) \cdot \rho(t,x) dx dt.
\]

\end{itemize}

\noindent It is not difficult to construct a space time--white noise. In fact, let $\{f_j : j \in \mathbb{N} \}$ be a complete orthonormal basis of 
$L^{2}([0,T] \times \mathbb{R}^d)$ and $\{ Z_j : j \in \mathbb{N} \}$ 
be a family of independent Gaussian random 
variables with mean zero and variance one. Then 
\begin{equation}\label{eq1wn}
 \dot{W}(t,x) := \sum_{j=1}^{\infty} f_j(t,x)Z_j
\end{equation}
is a  space--time white noise, where the action is defined as 
\[
 (\dot{W}, \xi  )_{\mathcal{D}^{\prime}} = \sum_{j=1}^{\infty} ( \xi, f_j) Z_j.
\]

It is well know that the last  action can be extended to  $\xi \in L^{2}([0,T]\times \mathbb{R}^{d})$ using It\^o isometry.

\noindent The cylindrical Wiener process $\{ W_t : L^2(\mathbb{R}^d) \rightarrow L^2(\Omega) : t\in [0,T] \}$ associated to $\dot{W}$ is 
given by 
\[
 W_t (\varphi):= (\dot{W}, \varphi 1_{[0,t]}).
\]
It is clear that $ W_t (\varphi)$ is a Brownian motion with variance $t\|\varphi\|^2$ for each $\varphi \in L^2(\mathbb{R}^d)$. 

\noindent We say that a sequence of smooth semimartingales $(W^n)$ is a {\it weak approximation} to the cylindrical Wiener $W$ if for all 
$\varphi \in L^2(\mathbb{R}^d)$,
\begin{equation}\label{conap}
 \lim_{n \rightarrow \infty} \int_{\mathbb{R}^d} \varphi(x)  W^n_t(x) dx =W_t(\varphi)  ,
\end{equation}

\noindent  where the convergence is in  $C([0,T], L^{2}(\Omega))$.

\noindent A weak approximation $( W^n)$ to the cylindrical Wiener process $W$ is {\it good} if for each $n \in \mathbb{N}$, $W^n_t(x)$ is a Brownian motion with  quadratic variation 
\[
\langle W^n( x) \rangle_t =C_n \cdot t,
\] 
 for all $x\in \mathbb{R}^d$ and where $C_n$ is a constant depending on $n$.  \\ 
  

\noindent A main ingredient in our approach for solving the Burgers equation is the use of regularization techniques for the 
white noise with respect to  space, we refer the reader 
to \cite{BG} and \cite{ober2} to background material.  

\noindent Let $\rho :  \mathbb{R}^d \rightarrow [0, \infty)$ be an infinitely differentiable symmetric function with
compact support such that $\int_{\mathbb{R}^d} \rho(x) \ dx=1$. We will consider the mollifiers $\rho_n (x)= n^d \rho(nx)$, with $n \in \mathbb{N}$. 
The regularization by $\rho$ of the space--time white noise $\dot{W}$, denoted by $\dot{W}_{\rho_n}$, are defined to be 
\[
 \dot{W}_{\rho_n}(t,x) := \rho_n \ast \dot{W}(t,x).
\]
We observe that $\dot{W}_{\rho_n}$ is white in time and colored in space, in fact we have that $\dot{W}_{\rho_n}(t,x)$ 
is a distribution valued Gaussian process with mean zero and covariance,

\[
 \mathbb{E}( \dot{W}_{\rho_n}(t,x) \dot{W}_{\rho_n}(s,y) ) = \delta(t-s) h_n(x-y)
\]
where $h_n : \mathbb{R}^d \rightarrow \mathbb{R}$ is given by $h_n(z)= \int_{\mathbb{R}^d}\rho_n (u)\rho_n (u+z)du$.

\noindent In terms of the expansion (\ref{eq1wn}) we have that 
\[
 \dot{W}_{\rho_n}(t,x) := \sum_{j=1}^{\infty} \rho_n \ast f_j(t,x)Z_j . 
\]

\noindent The mollified cylindrical Wiener process $W_{t}^{\rho_n}(x)$ associated with the space--time white noise $\dot{W}(t,x)$ is defined by
\begin{equation}
W_{t}^{\rho_n}(x):= W_{t}(\rho_n(x-\cdot)).
\end{equation}
The distributional time derivative of $W_{t}^{\rho_n}(x)$ is $\dot{W}_{\rho_n}(t,x)$.

\noindent We have that $W_{t}^{\rho_n}(x)$ is a Brownian motion with quadratic variation,
\begin{equation}\label{qv}
\langle W^{\rho_n}(x) \rangle_t = \| \rho \|^2n^d \cdot t.
\end{equation}
In particular we have proved the existence of good weak approximations to cylindrical Wiener processes. 
\begin{proposition}
$( W^{\rho_n})$ is a good weak approximation to the cylindrical Wiener process $W$.
\end{proposition}

\section{Existence of weak  solution in any dimension}
Let $( W^n)$ be a good weak approximation to the cylindrical Wiener $W$. We will denote by $H_{n}(t,x)$ the process $\log   Z_{n}(t,x)$, 
where $Z_n$ is the solution of the regularized stochastic heat equation in
the It\^o sense
\begin{equation}\label{heat2}
 \left \{
\begin{array}{lll}
dZ_{n}  & = & \Delta_x Z_{n} \ dt  + Z_{n} \ dW^n, \  \\
Z_n(0,x) & = &  e^{f(x)}.
\end{array}
\right .
\end{equation}
\noindent We observe that $W^{n}(t,x)$ is $C^{\infty}$--martingale then we follow the existence and uniqueness 
of solution of the classical theory,  see for instance \cite{Ku}. Applying the Feynman--Kac formula we obtain the following representation of the solution to equation (\ref{heat2}) with 
$f \in C_b^{\infty}(\mathbb{R}^d)$,
\begin{equation}
 Z_n(t,x)=\tilde{\mathbb{E}}( {\rm e}^{f(x+B_t)} {\rm e}^{\int_0^t W^n(t-s,x+B_s)ds})
\end{equation}
where $B_t$ is a Brownian motion independent of $W^n$ such that $B_0=0$ defined in an auxiliary probability space, see  \cite{Dermoune}.

\noindent The sequence of smooth semimartingales $(U_n)$ given by $U_n := \nabla_x H_n$ is called the {\it Cole--Hopf sequence associated to} 
$(W^n)$. 
\begin{theorem}\label{teoSolu2} Let  $f\in  C_{b}^{\infty}(\mathbb{R}^d)$. The Cole--Hopf sequence associated to $( W^n)$ is a weak generalized solution of 
the stochastic Burgers equation (\ref{burg}).
\end{theorem}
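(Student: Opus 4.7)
The strategy is a Cole--Hopf / It\^o computation in which the It\^o correction --- which blows up with $n$ --- is killed when we take $\nabla_x$. The second clause of Definition~\ref{solu3} is immediate: since $Z_n(0,x) = \exp(f(x))$, one has $H_n(0,x) = f(x)$, hence $U_n(0,x) = \nabla_x f(x)$.

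The heart of the proof is to derive an exact SPDE for $U_n$. From (\ref{heat2}) the martingale part of $Z_n$ is $\int Z_n\, dW^n$, and since $W^n$ is a good weak approximation with $\langle W^n(\cdot,x)\rangle_t = C_n t$ (see (\ref{qv})), we have $d\langle Z_n\rangle_t = C_n Z_n^{2}\, dt$. It\^o's formula applied to $\log$ yields
\[
dH_n \;=\; \frac{\Delta_x Z_n}{Z_n}\, dt \;+\; dW^n \;-\; \frac{C_n}{2}\, dt,
\]
and the pointwise chain rule $\Delta_x Z_n/Z_n = \Delta_x H_n + \|\nabla_x H_n\|^{2} = \Delta_x H_n + \|U_n\|^{2}$ (valid because $Z_n>0$ is $C^\infty$ in $x$ by parabolic regularity) reduces this to $dH_n = (\Delta_x H_n + \|U_n\|^{2} - C_n/2)\, dt + dW^n$. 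Applying $\nabla_x$ now \emph{eliminates the divergent constant $C_n/2$}, leaving
\[
dU_n \;=\; \bigl(\Delta_x U_n + \nabla_x\|U_n\|^{2}\bigr)\, dt \;+\; d(\nabla_x W^n).
\]
This is the key identity: in spite of the multiplicative-noise It\^o correction blowing up with $n$, the gradient annihilates it because it is constant in $x$, and $U_n$ therefore solves the Burgers equation with the smoothed forcing $\nabla_x \dot W^n$ exactly.

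Pairing with $\varphi\in \mathcal{D}((0,T)\times\mathbf{R}^d;\mathbf{R}^d)$, stochastic integration by parts in $t$ (boundary terms vanish by compact support in $(0,T)$) followed by spatial integration by parts makes the two $\Delta_x$-contributions and the two $\nabla_x\|U_n\|^{2}$-contributions cancel, leaving
\[
\langle \mathcal{L}_B U_n, \varphi\rangle \;=\; -\int_0^T\!\!\int_{\mathbf{R}^d} (\nabla_x\cdot\varphi)(t,x)\, dW^n(t,x)\, dx.
\]
Passing to the limit, the good-weak-approximation property (\ref{conap}) together with an It\^o-isometry / stochastic-Fubini argument applied to the deterministic $L^{2}$ function $\nabla_x\cdot\varphi$ gives $\lim_n\langle \mathcal{L}_B U_n, \varphi\rangle = -\int (\nabla_x\cdot\varphi)\, dW = \int \varphi\cdot\nabla_x\dot W\, dx\, dt$, which is the required limit. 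The main technical obstacles are (a) justifying the interchange of spatial integration and the stochastic differential in order to rewrite $\int \varphi\cdot d(\nabla_x W^n)\, dx$ as $-\int(\nabla_x\cdot\varphi)\, dW^n\, dx$, and (b) extending (\ref{conap}), stated for purely spatial test functions, to the $(t,x)$-dependent integrand $\nabla_x\cdot\varphi$; both follow routinely from smoothness of $W^n$ in $x$ and It\^o isometry. The true content of the theorem is the cancellation of the It\^o correction under $\nabla_x$: this is what allows the Cole--Hopf object $\nabla_x \log Z_n$ to serve as a generalized solution despite the fact that the white-noise-driven heat equation has no function-valued solution in dimension $d\geq 2$.
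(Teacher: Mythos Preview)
Your argument is correct and follows essentially the same route as the paper: apply It\^o's formula to $\log Z_n$, use the identity $\Delta_x Z_n/Z_n = \Delta_x H_n + \|\nabla_x H_n\|^2$, observe that the It\^o correction $-C_n/2$ is constant in $x$ and so dies under $\nabla_x$, and arrive at $\langle \mathcal{L}_B U_n,\varphi\rangle = -\int (\nabla_x\cdot\varphi)\,dW^n$, then pass to the limit via~(\ref{conap}). The only cosmetic difference is that the paper pairs the $H_n$-equation directly with $\partial_t\partial_{x_i}\varphi_i$ rather than first taking $\nabla_x$ and then pairing with $\varphi$; the computations are equivalent.
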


\begin{proof}  Let $(U_n)$ be the Cole--Hopf sequence given by $U_n(t,x) := \nabla_x H_n (t,x)= \nabla_x \log Z_n(t,x)$. Since $Z_n$ 
satisfies the equation (\ref{heat2}) it is a $C^{\infty}$--semimartingale. Then applying  the classical It\^o formula we have that
\begin{eqnarray*}
 dH_n & = & \frac{dZ_n}{Z_n} -\frac{d \langle Z_n \rangle}{2Z_n^2} \\
 &= & \frac{\Delta_x Z_n dt + Z_n dW^{n}}{Z_n} -\frac{Z_n^2 C_ndt}{2Z_n^2} \\
& =& \frac{\Delta_x Z_n dt}{Z_n}+  dW^{n} -\frac{C_n dt}{2}.
\end{eqnarray*}
It is easy to check that 
\[
 \frac{\Delta_x Z_n }{Z_n} = \Delta_x H_n + \| \nabla_x H_n \|^2.
\]
\noindent Combining the above equations we obtain
\begin{equation}\label{eq KPZ}
 dH_n = (\Delta_x H_n + \| \nabla_x H_n \|^2)dt +  dW^{n} -\frac{C_n dt}{2},
\end{equation}
that is
\[
 H_n(t,x)= f(x) +\int_0^t (\Delta_x H_n(s,x) + \| \nabla_x H_n(s,x) \|^2)ds + W^{n}_t(x) -\frac{C_n t}{2}.
\]

\noindent Let $\varphi=(\varphi_1, \cdots , \varphi_d ) \in \mathcal{D}((0,T) \times  \mathbb{R}^d ; \mathbb{R}^d)$. 
Multiplying (\ref{eq KPZ}) by $\partial_t \partial _{x_i}\varphi_i(t,x)$ and integrating on $[0,T]\times \mathbb{R}^d$, we obtain that
\begin{eqnarray*}
\int_{[0,T]\times \mathbb{R}^d} H_n~\partial_t \partial _{x_i}\varphi_i dx dt & = & -\int_{[0,T]\times \mathbb{R}^d} 
(\Delta_x H_n + \| \nabla_x H_n(s,x) \|^2 )\partial _{x_i}\varphi_idx dt \\
& & - \int_{[0,T]\times \mathbb{R}^d} \partial_{t} \partial _{x_i}\varphi_i(t,x)  dW^n_t(x)dx.
\end{eqnarray*}
Thus 
\begin{equation}\label{BR}
( \mathcal{L}_B U_{n}, \varphi ) =  \int_{[0,T]\times \mathbb{R}^d} \nabla_x  \partial_{t} \varphi(t, x)    dW^n_t(x)dx
\end{equation}

Then we conclude 

\begin{equation}\label{C1}
 \lim_{n \rightarrow \infty}( \mathcal{L}_B U_{n}, \varphi ) = (\nabla \dot{W}, \varphi)_{\mathcal{D^{\prime}}},
   \end{equation}
	
	where the convergence is in  $L^{2}(\Omega)$.
$\square$

\end{proof}

\begin{remark} We observe that  by definition of the solution we have 

\[
\lim_{n \rightarrow \infty}( \mathcal{L}_BU_n, \varphi ) =  (\nabla \dot{W}, \varphi)_{\mathcal{D^{\prime}}} \  in \ L^{2}(\Omega).
\]
 
  Via classical arguments, taking a dense and countable set of $\mathcal{D}$,  we  can conclude that  there exists a subsequence of $U_n$
such that

\[
\lim_{n \rightarrow \infty}( \mathcal{L}_BU_n, \varphi ) =  (\nabla \dot{W}, \varphi)_{\mathcal{D^{\prime}}} 
\]

 where the convergence is almost surely in  $\Omega$. 
\end{remark}

\subsection{ Classical distributional solution in the one dimensional case.} We observe that in the one--dimensional case the sequence $( Z_{n})$ of solutions of the regularized stochastic heat equations (\ref{heat2}) 
converges uniformly on compacts of $(0,T) \times \mathbb{R}$ to $Z$, where 
$Z$ is the solution of the stochastic heat equation in the It\^o sense
\begin{equation}\label{heat3}
 \left \{
\begin{array}{lll}
dZ  & = & \Delta_x Z \ dt  + Z \ dW, \  \\
Z(0,x) & = &  e^{f(x)}.
\end{array}
\right .
\end{equation}
See  L. Bertini and N. Cancrini \cite{BC}, Theorem 2.2.

\noindent We will denote by $U(t,x)$ the gradient in the sense of  distributions of $\log Z(t,x)$, that is, $U(t,x)=\nabla_x \log Z(t,x)$.

\begin{corollary}
 $U$ is a distributional solution of the one--dimensional stochastic Burgers equation (\ref{burg}). That is, $U$ verifies
\begin{enumerate}
\item There exists a sequence of $C^{\infty}$--semimartingales  $(U_n)$ 
such that  
\[
U=\lim_{n \rightarrow \infty}U_n 
\]
and there exists $\nabla_x \|U \|^{2}$ such that   
\[
\nabla_x \|U \|^{2}:=\lim_{n\rightarrow\infty} \nabla_x \|U_n\|^{2}  {\mbox{ in }} \mathcal{D}^{\prime}((0,T) \times  \mathbb{R}).
\]

\item For all $\varphi \in \mathcal{D}((0,T) \times  \mathbb{R} ; \mathbb{R})$,
\[
( \mathcal{L}_B U, \varphi ) = (\nabla \dot{W},\varphi).
\]

\item $\nabla_x f$ is the section of $U$ at $t=0$ in the sense of S. \L{}ojasiewicz, see \cite{ober2}.
\end{enumerate}
 
\end{corollary}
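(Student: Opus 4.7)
The strategy is to leverage the uniform-on-compacts convergence $Z_n \to Z$ from Bertini--Cancrini together with the weak PDE identity already proved for the Cole--Hopf sequence in Theorem~\ref{teoSolu2}. Everything reduces to transferring convergence from $Z_n$ to $\log Z_n$, then to its distributional gradient, and finally to the nonlinear term by subtracting the linear pieces.

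First, I would recall that in dimension one the solution $Z$ of (\ref{heat3}) is almost surely continuous and strictly positive on $[0,T]\times \mathbf{R}$ (a standard fact from \cite{BC}). Combined with $Z_n \to Z$ uniformly on compacts and a similar positivity for $Z_n$ (which follows from the Feynman--Kac representation, since $\exp(f)$ is bounded below on compacts), we get $\log Z_n \to \log Z$ uniformly on compacts of $(0,T)\times\mathbf{R}$. In particular the convergence holds in $\mathcal{D}'((0,T)\times\mathbf{R})$, and since distributional differentiation is continuous, $U_n = \nabla_x \log Z_n \to U = \nabla_x \log Z$ in $\mathcal{D}'$. This proves the first part of item (1) and also shows that $\partial_t U_n \to \partial_t U$ and $\Delta_x U_n \to \Delta_x U$ in $\mathcal{D}'$.

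For the existence of $\nabla_x \|U\|^2$ as the limit of $\nabla_x \|U_n\|^2$, I would rearrange the definition of the Burgers operator:
\[
\nabla_x \|U_n\|^2 \;=\; \partial_t U_n - \Delta_x U_n - \mathcal{L}_B U_n.
\]
Pairing with $\varphi \in \mathcal{D}((0,T)\times\mathbf{R};\mathbf{R})$, the first two terms converge in $\mathcal{D}'$ by the previous step, and the last term converges to $-\int \varphi \cdot \nabla_x \dot{W}\,dxdt$ by Theorem~\ref{teoSolu2}. Hence $\langle \nabla_x \|U_n\|^2,\varphi\rangle$ has a limit, which we take as the definition of $\langle \nabla_x \|U\|^2,\varphi\rangle$; this finishes (1). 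Item (2) then follows immediately by passing to the limit in the identity $\langle \mathcal{L}_B U_n,\varphi\rangle = \langle \partial_t U_n - \Delta_x U_n,\varphi\rangle - \langle \nabla_x\|U_n\|^2,\varphi\rangle$.

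For item (3), recall that the Lojasiewicz section of a distribution $T(t,x)$ at $t=0$, when it exists, coincides with the weak limit of $T(t,\cdot)$ as $t\to 0^+$ tested against spatial test functions. Since $\log Z$ is continuous up to $t=0$ with $\log Z(0,x) = f(x)$, and since $Z_n \to Z$ uniformly on compacts including a neighborhood of $\{0\}\times K$ for any compact $K \subset \mathbf{R}$, we obtain $\log Z(t,\cdot) \to f$ in $\mathcal{D}'(\mathbf{R})$ as $t\to 0^+$, whence $U(t,\cdot) = \nabla_x \log Z(t,\cdot) \to \nabla_x f$ in $\mathcal{D}'(\mathbf{R})$ by continuity of differentiation. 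This is precisely the Lojasiewicz section condition recalled in~\cite{ober2}. The main obstacle is the delicate point in step one, namely justifying passage from $Z_n \to Z$ to $\log Z_n \to \log Z$ uniformly on compacts, which hinges on the strict positivity of the stochastic heat equation solution; once that is in hand, the rest of the argument is a formal manipulation of the limiting PDE.
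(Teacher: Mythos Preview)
Your proposal is correct and follows essentially the same route as the paper: deduce $U_n\to U$ in $\mathcal{D}'$ from the Bertini--Cancrini uniform convergence $Z_n\to Z$ and positivity, then obtain the nonlinear limit by subtracting the (convergent) linear pieces from the identity~(\ref{BR}), and finally use continuity of $\log Z$ at $t=0$ for the Lojasiewicz section. The only cosmetic difference is that the paper states the convergences $\langle U_n,\partial_t\varphi\rangle\to\langle U,\partial_t\varphi\rangle$ and $\langle U_n,\Delta_x\varphi\rangle\to\langle U,\Delta_x\varphi\rangle$ as ``clear'' without spelling out the $\log Z_n\to\log Z$ step, and handles item~(3) via a strict delta net rather than the equivalent $t\to0^+$ limit you describe; your mention of $Z_n\to Z$ in item~(3) is superfluous there, since only the continuity of $Z$ itself is needed.
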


\begin{proof}
It is clear that, 
\begin{equation}\label{e1}
 \lim_{n \rightarrow \infty} (U_n , \partial_t\varphi)=(U, \partial_t\varphi)
\end{equation}
and
\begin{equation}\label{e2}
 \lim_{n \rightarrow \infty}(U_n , \Delta_x\varphi)=(U, \Delta_x\varphi).
\end{equation}
\noindent We have that $ \int_{[0,T]\times \mathbb{R}}\varphi(t, x)    dW_t(x)dx$ defines a continuous linear functional from 
$\mathcal{D}((0,T) \times \mathbb{R}; \mathbb{R})$ to $\mathbb{R}$ (see  K. Schauml\"{o}ffel \cite{Sch}) and
\begin{equation}\label{e3}
\lim_{n \rightarrow \infty}   \int_{[0,T]\times \mathbb{R}} \nabla_x  \partial_{t} \varphi(t, x)    dW^n_t(x)dx= (\nabla\dot{W}, \varphi).
\end{equation}

\noindent  From the equation (\ref{BR})
and the  convergences   (\ref{e1}), (\ref{e2}) and (\ref{e3}) we have that for all $ \varphi\in \mathcal{D}((0,T) \times \mathbb{R}; \mathbb{R})$,

\[
 \int_{[0,T]\times \mathbb{R}}  \nabla_x   \|U_n(t,x)\|^{2}  \varphi(t,x)  \ dtdx 
\]
converges and defines a linear functional. Thus the  nonlinearity
\[
(\nabla_x   \|U(t,x)\|^{2} , \varphi) := \lim_{n \rightarrow \infty} \int_{[0,T]\times \mathbb{R}}  \nabla_x   \|U_n(t,x)\|^{2}  \varphi(t,x)  \ dtdx
\]
is well defined.

\noindent Let $\varphi \in \mathcal{D}( \mathbb{R})$ and $\{ \rho_{\varepsilon}: \varepsilon >0 \}$ be a strict delta--net. From the  continuity 
of $ Z(t,x) $,

\begin{eqnarray*}
\lim_{\varepsilon\rightarrow 0} \int_{[0,T]\times \mathbb{R}} \nabla_x \ln \  Z(t,x) \  \rho_{\varepsilon}(t) \   \varphi(x) \
 dt\ dx  & = & 
-  \int_{\mathbb{R}} f(x) \  \nabla_x \varphi(x)\ dx \\
& = & \int_{\mathbb{R}}   \nabla_x  f(x) \   \varphi(x)\ dx.    
\end{eqnarray*}
Thus $\nabla_xf$ is the section of $U$ at $t=0$. $\square$
\end{proof}

\begin{remark} More details on the results of this subsection can be found in \cite{CO}. 
\end{remark}
\section*{Acknowledgements}

The authors P. Catuogno  and C, Olivera are  partially supported by
CNPq through the grant 460713/2014--0. C. Olivera
 also by FAPESP through the grants  2015/04723--2 and 2015/07278--0.
 J.F. Colombeau is supported by  
FAPESP  through the grant 2012/18940--7 .

\end{document}